\numberwithin{equation}{section}
\theoremstyle{plain}                    
\newtheorem{lem}[thm]{Lemma}            
\newtheorem*{thm*}{Theorem A}
\newtheorem*{teo*}{Theorem}
\theoremstyle{definition} 
\theoremstyle{remark}                   
\title{On the tetracanonical map of varieties of general type and
maximal
Albanese dimension}
\keywords{$M$-regularity, Pluricanonical maps}
\author{Sofia Tirabassi}
\address{
Dipartimento di Matematica\\
Università degli Studi  Roma Tre\\
\email{tirabass@mat.uniroma3.it}
}
\date{}
\definecolor{grigio}{gray}{.8}
\definecolor{viola}{rgb}{.4,0,.6}
\definecolor{rosso}{rgb}{.7,0,.1}
\definecolor{verde}{rgb}{0,.3,0}
\begin{document}
 \maketitle
\begin{abstract}
 \noindent We prove the birationality of the $4$-canonical map
of smooth projective varieties of
general type and maximal Albanese dimension.
\end{abstract}

\section{Introduction}
\nocite{PPI}
\nocite{PPI}
An interesting problem in birational geometry is,
given $Z$ a projective variety of general type, to understand for
which $n$ the $n$-th
pluricanonical
linear system $|\omega_Z^{\otimes n}|$ yields a birational map.\\
\indent For varieties of 
\emph{maximal
Albanese
dimension} (i.e. the Albanese map $Z\longrightarrow
\mathrm{Alb}(Z)$ is generically finite), this study brought some
surprising result, since  Chen
and Hacon \cite{CH2007} proved that, independently on the
dimension,
$\omega_Z^{\otimes n}$ is birational for every $n\ge 6$. More
recently, it
was shown by
Jiang \cite{Jiang2009}, applying ideas from \cite{PP3}, that, if
$Z$ is as above, the
pentacanonical map
$\varphi_{|\omega_Z^{\otimes 5}|}$  is birational.
The main result of this paper, whose proof also uses
$M$-regularity
techniques introduced by Pareschi and Popa in \cite{PP3}, is an
improvement of Chen-Hacon and Jiang's theorems:
\begin{thm*}
 If $Z$ is a complex projective smooth variety of maximal
Albanese
dimension and general type, then the tetracanonical map
$\varphi_{|\omega_Z^{\otimes 4}|}$ birational.
\end{thm*}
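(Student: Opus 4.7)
The plan is to factor a hypothetical section of $\omega_Z^{\otimes 4}$ separating two general points as a product of a section of $\omega_Z \otimes P$ that separates them and a section of $\omega_Z^{\otimes 3} \otimes P^{-1}$ that is nonzero at one of them, for a sufficiently generic $P \in \mathrm{Pic}^0(Z)$. Let $a \colon Z \to A := \mathrm{Alb}(Z)$ be the Albanese map, which is generically finite by hypothesis. Birationality of $\varphi_{|\omega_Z^{\otimes 4}|}$ is equivalent to the following pointwise statement: for two general points $x_1, x_2 \in Z$ there exists $s \in H^0(\omega_Z^{\otimes 4})$ with $s(x_1) = 0$ and $s(x_2) \ne 0$.

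For the first factor I would invoke the continuous global generation of $a_*\omega_Z$, which follows from its being a $GV$-sheaf (Hacon's generic vanishing) via the Pareschi--Popa criterion. After removing a proper closed subset of $Z$, for $P$ general in $\mathrm{Pic}^0(Z)$ one obtains $s_1 \in H^0(\omega_Z \otimes P)$ with $s_1(x_1) = 0$ and $s_1(x_2) \ne 0$. The remaining task is to exhibit $s_2 \in H^0(\omega_Z^{\otimes 3} \otimes P^{-1})$ nonzero at $x_2$ for the \emph{same} $P$; the product $s = s_1 \cdot s_2$ is then the required section of $\omega_Z^{\otimes 4}$. Since each of the two required conditions on $P$ is open and nonempty in $\mathrm{Pic}^0(Z)$, their intersection will contain a dense open set of admissible $P$.

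Producing $s_2$ reduces to the statement that $\omega_Z^{\otimes 3} \otimes P^{-1}$ is globally generated at $x_2$ for $P$ in a dense open subset of $\mathrm{Pic}^0(Z)$, i.e.\ that $a_*\omega_Z^{\otimes 3}$ is continuously globally generated at $a(x_2)$. Unlike $a_*\omega_Z$, this pushforward is not in general $M$-regular, so Pareschi--Popa does not apply directly. I would decompose $a_*\omega_Z^{\otimes 3}$ as a direct sum of pieces of the form $p_i^* \mathcal{F}_i \otimes Q_i$, where the $p_i \colon A \to A_i$ are surjections onto quotient abelian varieties, each $\mathcal{F}_i$ is $M$-regular on $A_i$, and $Q_i \in \mathrm{Pic}^0(A)$; continuous generation can then be pulled back from the quotients, provided the bad loci of the pieces are controlled.

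The main obstacle will be precisely this control of the positive-dimensional summands: when $p_i$ has positive-dimensional fibres, $p_i^*\mathcal{F}_i$ is only transversally continuously generated and may fail to separate along the fibres of $p_i$. This slack is exactly what forces the extra canonical power in Jiang's pentacanonical argument, and shaving it off by one will require combining Koll\'ar-type injectivity with an induction on $\dim Z$: each non-trivial quotient $A \to A_i$ induces a fibration $Z \dashrightarrow A_i$ whose general fibre is again of maximal Albanese dimension and general type, on which a lower-dimensional instance of the theorem can be applied to upgrade continuous generation on the base to effective generation on $Z$.
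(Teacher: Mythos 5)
There is a genuine gap at the very first step of your plan. You assert that $a_*\omega_Z$ is continuously globally generated because it is a $GV$-sheaf (Hacon), invoking the Pareschi--Popa criterion; but that criterion requires \emph{$M$-regularity} ($\mathrm{Codim}\, V^i > i$ for $i>0$), which is strictly stronger than $GV$ ($\mathrm{Codim}\,V^i \geq i$), and $a_*\omega_Z$ is in general only $GV$. Worse, the entire approach of factoring $\omega_Z^{\otimes 4}$ as $(\omega_Z\otimes P)\otimes(\omega_Z^{\otimes 3}\otimes P^{-1})$ and letting the degree-one factor do the separating collapses precisely in the case that matters: when $\chi(\omega_Z)=0$ (which is the only case not already covered by Chen--Hacon's tricanonical result), the locus $V^0(\omega_Z)$ is a \emph{proper} subvariety of $\Pic^0(Z)$, so $H^0(\omega_Z\otimes P)=0$ for general $P$ and there is no $s_1$ at all. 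Your openness-of-conditions argument would then have to run over the positive-codimension locus $V^0(\omega_Z)$ rather than over all of $\Pic^0(Z)$, and the "nonempty open intersection" step breaks down.

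The paper instead splits $4=2+2$ and builds everything around $\F=\omega_Z^{\otimes2}\otimes\fas{J}(||\omega_Z||)$: Nadel vanishing for asymptotic multiplier ideals kills $V^i(\F\otimes\gamma)$ for all $i>0$ and \emph{all} $\gamma$, so after twisting by $\fas{I}_z$ the only obstruction to $M$-regularity is $V^1(\fas{I}_z\otimes\F)$. That locus is then shown to miss a translate $T_j+[\beta_j^{\otimes 2}]$ of each subtorus appearing in the subtorus decomposition of $V^0(\omega_Z)$, by splitting sections of $\omega_Z^{\otimes2}\otimes\beta_j^{\otimes2}\otimes\gamma$ as products of two sections of $\omega_Z\otimes\beta_j\otimes(\cdot)$; Chen--Hacon's theorem that these $T_j$ generate $\Pic^0(Z)$, together with Lemma \ref{lem:effdiv}, then forces $\mathrm{Codim}\,V^1>1$. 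The second degree-two factor $\omega_Z^{\otimes2}\otimes\alpha\otimes\fas{J}(||\omega_Z\otimes\alpha||)$ is $M$-regular (hence CGG) for every $\alpha$ by the same Nadel-plus-Grauert--Riemenschneider argument, and the Pareschi--Popa multiplication theorem then gives global generation of $\fas{I}_z\otimes\omega_Z^{\otimes4}\otimes\alpha$ away from a fixed proper closed subset. So the essential ideas you are missing are: (i) the use of $\fas{J}(||\omega_Z||)$ to get unconditional higher vanishing rather than mere $GV$, and (ii) the mechanism (Chen--Hacon generation of $\Pic^0$ by the $T_j$, plus the divisor-intersection lemma) that bounds $V^1(\fas{I}_z\otimes\F)$ without any positivity of $\chi(\omega_Z)$. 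Your Chen--Jiang-type decomposition of $a_*\omega_Z^{\otimes3}$ and the proposed induction on dimension are not used in the paper and, as you yourself note, would still leave the positive-dimensional fibre slack unresolved.
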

The argument consists in showing that reducible divisors
$D_\alpha+D_{\alpha^\vee}$, with $\alpha\in \Pic^0(Z)$,\linebreak
$D_\alpha\in |\omega^{\otimes2}_Z\otimes\alpha|$ and
$D_{\alpha^\vee}\in |\omega_Z^{\otimes2}\otimes\alpha^\vee|$
separate points in a suitable open set of $Z$. The crucial point
is that, for all $\alpha\in \Pic^0(Z)$, the sections of
$\omega_Z^{\otimes2}\otimes\alpha$ passing trough a general point
of $Z$ have good generation properties. In order to achieve this
we use Pareschi-Popa theory of  $M$-regularity and continuous
global generation (\cite{PPI,PP3}), joint with a theorem of
Chen-Hacon \cite{Chen2001} on the fact that the variety
$V^0(\omega_Z)$ spans $\Pic^0(Z)$. \\
\indent It is worth to recall that if $Z$ is variety of maximal
Albanese
dimension,
then $\chi(\omega_Z)\ge 0$ (\cite{GL1}) and that
Chen-Hacon
proved in \cite{CH2007} that if $\chi(\omega_Z)\ge 1$ then, as
for curves, the
tricanonical map is always birational and this is sharp, as shown
by curves
of genus 2. Therefore our result is relevant only in the more
mysterious
case where $\chi(\omega_Z)=0$, which appears only in dimension
$\ge 3 $ (see
\cite{Ein1997}). In fact the author does not know any example of
varieties of
maximal Albanese dimension with non-birational tricanonical
map\footnote{I am deeply indebted to C. Hacon for
showing
me that the tricanonical maps of the varieties constructed by Ein
and Lazarsfeld in \cite[Example 1.13]{Ein1997} are always
birational. In a
similar way it is possible to see that the tricanonical map of
the variety constructed by Chen and Hacon in
\cite[Example at pg. 214]{CH2004} is birational.}
and it is
tempting to conjecture that the tricanonical map should be always
birational
for such varieties.\\

In the sequel, $Z$ will always be a smooth complex  variety of
general type and maximal Albanese dimension while $\omega_Z$
shall denote its dualizing sheaf. By $\mathrm{Alb}(Z)$ we will
mean the Albanese variety of $Z$. Given $\fas{L}$ an invertible
sheaf on a projective variety $Y$, then $\fas{J}(||\LL||)$ will
be the asymptotic
multiplier ideal sheaf associated to the complete linear series
$|\LL|$ (cfr. \cite{LazarsfeldIeII}). Finally, if $\F$ is a
coherent sheaf on $Y$, then by $h^i(\F)$ we will
mean $\dim
H^i(Y,\F)$. Given a linear system $V\subseteq|\LL|$, we will
say, with a slight abuse of notation,
that it is birational if the corresponding rational map
$\varphi_V$ is birational.

\subsubsection*{Acknowledgments} I am indebted to my advisor, G.
Pareschi, for many advices and valuable suggestions. I am also
very grateful to
C. Hacon, and M. Popa for carefully reading a first draft of
this paper and suggesting many emprovements and corrections.
Moreover, a particular thank must go to C. Hacon for explaining
to me how to work with some specific
examples. Finally it is a pleasure to thank A. Calabri, C.
Ciliberto, A. Rapagnetta, G. Pareschi and F. Viviani for very
interesting and
useful discussion.
\section{Setup and definitions}

In what follows we briefly recall some definitions
about $M$-regular sheaves and their generation property.
Given a coherent sheaf $\F$ on a smooth projective variety $Y$
we denote by $\mathrm{Bs}(\F)$ the \emph{non-generation locus
of $\F$}, i. e. the support of the cokernel of the evaluation
map\linebreak $H^0(Y,\F)\otimes\OO{Y}\longrightarrow\F $. If
$\F$ is a line bundle, then $\mathrm{Bs}(\F)$ is just the base
locus of $\F$.\\
The \emph{cohomological
support loci} of $\F$ are defined as:
$$V^i(Y,\F)%
:=\{\alpha\in\Pic^0(Y)\:|\:h^i(\F\otimes\alpha)>
0\}\subseteq\Pic^0(Y).$$
As for cohomology groups, we will occasionally suppress $Y$ from
the notation, simply writing $V^i(\F)$.

\begin{dfn}[{\cite[Definitions 2.1 and
2.10]{PPI},
}]
Let $\F$ a sheaf on a smooth projective variety $Y$.
\begin{enumerate}
 \item It is said to be $GV_1$ if
$\mathrm{Codim}\:
V^i(\F)>i$
for every $i>0$. In the case $Y$ is an abelian variety, $GV_1$
sheaves are called \emph{$M$-regular}.
\item Given $y\in Y$, we say that $\F$ is \emph{continuously
globally
generated at $y$} (in brief CGG at $y$) if the sum of the
evaluation maps
\[\mathcal{E}v_{U,y}:\bigoplus_{\alpha\in U}H^0(\F\otimes
\alpha)\otimes\alpha^{-1}\longrightarrow
\F\otimes\mathbb{C}({y})\]
is surjective for every $U$ non empty open set of $\Pic^0(Y)$.
\end{enumerate}
\end{dfn}
\begin{exa}\label{ese}
 In \cite[Proposition 2.13]{PPI} Pareschi and Popa proved that an
$M$-regular sheaf on an abelian variety is CGG at every point.
\end{exa}

\section{Proof of Theorem A}
We will prove this slightly more general statement:
\begin{thm}\label{thm: A}
 Let $Z$ be a smooth complex projective variety of maximal
Albanese dimension and of  general type, then, for every
$\alpha\in\Pic^0(Z)$, the linear system $|\omega_Z^{\otimes
4}\otimes\alpha|$ is birational.
\end{thm}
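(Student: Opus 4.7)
The plan is to produce, for any two distinct points $x,y\in Z$ with $x$ general, a divisor in $|\omega_Z^{\otimes 4}\otimes\alpha|$ passing through $x$ but not through $y$, and analogously to separate tangent directions at a general $x$. Following the strategy hinted at in the introduction, I would construct such a tetracanonical divisor as a sum $D_\beta+D_{\alpha\beta^{-1}}$ with $D_\beta\in|\omega_Z^{\otimes 2}\otimes\beta|$ and $D_{\alpha\beta^{-1}}\in|\omega_Z^{\otimes 2}\otimes\alpha\beta^{-1}|$ for a suitable $\beta\in\Pic^0(Z)$. This reduces the statement to showing that, as $\beta$ ranges over a non-empty open subset of $\Pic^0(Z)$, the linear series $|\omega_Z^{\otimes 2}\otimes\beta|$ contains sections through $x$ whose common zero locus avoids $y$, and analogously separates tangent vectors at $x$.

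To exhibit such sections I would replace $\omega_Z^{\otimes 2}\otimes\beta$ by the subsheaf $\omega_Z^{\otimes 2}\otimes\beta\otimes\fas{J}(\|\omega_Z^{\otimes 2}\|)$, whose global sections coincide with those of $\omega_Z^{\otimes 2}\otimes\beta$ by the standard properties of asymptotic multiplier ideals; no sections are lost, but Nadel-type vanishing becomes available. The key object is then $\mathcal{G}:=\omega_Z\otimes\fas{J}(\|\omega_Z^{\otimes 2}\|)$; its Albanese pushforward should be a $GV$-sheaf, so that tensoring with $\omega_Z\otimes\beta$ places us squarely inside the Pareschi-Popa framework. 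The crucial step is to show that, for general $x\in Z$, a suitable twist of $\mathcal{G}$ incorporating the ideal of $x$ pushes forward to an $M$-regular sheaf on $\mathrm{Alb}(Z)$, and is therefore continuously globally generated at every $y\ne x$ by Example \ref{ese}. Chen-Hacon's theorem that $V^0(\omega_Z)$ spans $\Pic^0(Z)$ intervenes here to guarantee that the CGG open set $U\subseteq\Pic^0(Z)$ can be arranged to meet its translate by $\alpha$, so that both $\beta$ and $\alpha\beta^{-1}$ may be chosen in CGG loci simultaneously, regardless of the prescribed $\alpha$.

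The main technical obstacle I expect is the verification of the $GV_1$/$M$-regularity condition for the Albanese pushforward of $\mathcal{G}$ twisted by the ideal of a general point $x$: one must bound from below the codimensions of all higher cohomological support loci after imposing a base point, and this is where the interplay between the asymptotic multiplier ideal, the Koll\'ar-type decomposition of the direct image of $\omega_Z$, and the Chen-Hacon spanning theorem is delicate. Once this CGG statement is in hand, the construction of $D_\beta$ separating $x$ from $y$ and of $D_{\alpha\beta^{-1}}$ passing through $x$ is formal, and the separation of tangent vectors at a general $x$ is handled analogously by replacing the ideal sheaf of $x$ with its square.
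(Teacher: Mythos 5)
You have correctly identified the framework (Pareschi--Popa $M$-regularity and continuous global generation, asymptotic multiplier ideals, the Chen--Hacon spanning theorem), and your overall outline follows the sketch given in the paper's introduction. But your proposal explicitly flags as ``the main technical obstacle'' exactly the step that constitutes the entire content of the proof, and you do not resolve it. The paper's argument hinges on \emph{Claim 1}: that $a_{Z*}(\fas{I}_z\otimes\omega_Z^{\otimes 2}\otimes\fas{J}(\|\omega_Z\|))$ is $M$-regular for general $z$. Establishing this requires three ingredients that are absent from your outline: (i) a reinterpretation of the locus $V^1(\fas{I}_z\otimes\F)$ as $\{\gamma : z\in\mathrm{Bs}(\F\otimes\gamma)\}$, coming from the long exact sequence and Nadel vanishing of $H^1(\F\otimes\gamma)$; (ii) Claim 2, that this locus avoids each translate $T_j+\beta_j^{\otimes 2}$, proved by a factorization $\omega_Z^{\otimes 2}\otimes\beta_j^{\otimes 2}\otimes\gamma\cong(\omega_Z\otimes\beta_j\otimes\delta)\otimes(\omega_Z\otimes\beta_j\otimes\gamma\delta^{-1})$ with $\delta$ in a generic position in $T_j$; and (iii) Lemma~\ref{lem:effdiv}, asserting that an effective divisor on an abelian variety must meet some translate $\gamma_i+T_i$ whenever the $T_i$ generate. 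Only then does Chen--Hacon's spanning theorem come in, to conclude that $V^1$ contains no divisor and hence has codimension $\ge 2$.

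Your proposed use of Chen--Hacon is not the one that works: you say it guarantees that the CGG open set ``can be arranged to meet its translate by $\alpha$.'' That is automatic for any dense open subset of $\Pic^0(Z)$ and needs no spanning result; the role of the spanning theorem is codimensional, via (iii) above. There are also two smaller issues. Your sheaf $\mathcal{G}=\omega_Z\otimes\fas{J}(\|\omega_Z^{\otimes 2}\|)$ has the wrong shape for Nadel vanishing: the term $\omega_Z\otimes L\otimes\fas{J}(\|L\|)$ requires $L=\omega_Z$, i.e.\ the multiplier ideal should be $\fas{J}(\|\omega_Z\|)$ (as in the paper's $\F=\omega_Z^{\otimes 2}\otimes\fas{J}(\|\omega_Z\|)$), not $\fas{J}(\|\omega_Z^{\otimes 2}\|)$. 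And the passage from CGG to birationality does not require replacing $\fas{I}_z$ by $\fas{I}_z^2$: the paper uses the Pareschi--Popa product theorem to deduce that $\fas{I}_z\otimes\omega_Z^{\otimes 4}\otimes\alpha$ is globally generated away from a proper closed subset, which already separates points and tangent vectors at general $z$.
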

\noindent Note that also in the setting of
Jiang and Chen-Hacon what is proved is the birationality of
$\omega_Z^{\otimes5}\otimes
\alpha$ for all $\alpha\in\Pic^0(Z)$.\\
We will need the following easy lemma.
\begin{lem}\label{lem:effdiv}
 Let $E$ an effective divisor on an abelian variety $X$. Take
$T_1,\;\ldots,T_k$ subtori of $X$ such that they generate $X$ as
an abstract group and let $\gamma_i$, $i=1,\ldots,k$ some points
of $X$. Then $E\cap (\gamma_i+T_i)\neq\emptyset$ for at least one
$i$.
\end{lem}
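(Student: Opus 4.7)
The plan is to argue by contraposition. First I would observe that, since $T_i$ is a subgroup of $X$, the condition $E\cap(\gamma_i+T_i)=\emptyset$ is equivalent to $\gamma_i\notin E+T_i$, where $E+T_i$ denotes the Minkowski sum $\{e+t:e\in E,\,t\in T_i\}$. Hence if the conclusion of the lemma fails, the existence of the $\gamma_i$'s witnesses $E+T_i\subsetneq X$ for every $i$, and it suffices to derive a contradiction from this assumption alone.

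I would then decompose $E=\sum_j a_j E_j$ into irreducible components and analyse each $E_j+T_i$. As the image of the irreducible proper variety $E_j\times T_i$ under the addition morphism, $E_j+T_i$ is a closed irreducible subvariety of $X$ containing $E_j$, so its dimension is either $\dim X-1$ or $\dim X$. In the second case $E_j+T_i=X$, whence $E+T_i=X$, contradicting our standing assumption. Thus $E_j+T_i=E_j$ for all $i$ and $j$, i.e.\ $T_i\subseteq\mathrm{Stab}(E_j):=\{x\in X:E_j+x=E_j\}$.

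The remainder is purely group-theoretic: $\mathrm{Stab}(E_j)$ is a subgroup of $X$ containing every $T_i$, so the hypothesis that the $T_i$'s generate $X$ as an abstract group forces $\mathrm{Stab}(E_j)=X$. But a translation-invariant closed subvariety of an abelian variety is either empty or the whole variety, contradicting the fact that $E_j$ is an irreducible divisor.

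The only mild technicality is verifying the closedness and irreducibility of $E_j+T_i$, which is immediate from the properness of $X$ and the irreducibility of products over $\mathbb{C}$; the dichotomy on $\dim(E_j+T_i)$ is the heart of the argument, and everything else is formal bookkeeping.
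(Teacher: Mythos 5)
Your argument is correct and self-contained, but it takes a genuinely different route from the paper's. The paper invokes the structure theorem for effective line bundles on abelian varieties: either $E$ is ample (in which case it meets every positive-dimensional subvariety, and at least one $\gamma_i+T_i$ is positive-dimensional since the $T_i$'s generate $X$), or $\mathcal{O}_X(E)$ is the pullback of an ample line bundle under a surjection $f\colon X\to A$ onto a nonzero quotient abelian variety, so that $E=f^*H$ with $H$ ample; missing every $\gamma_i+T_i$ then forces each $T_i\subseteq\ker f$, contradicting that the $T_i$'s generate $X$. You instead avoid the descent theorem entirely: you pass to irreducible components $E_j$, use properness and irreducibility to pin down $\dim(E_j+T_i)\in\{\dim X-1,\dim X\}$, and conclude that either $E+T_i=X$ or every $T_i$ lies in the set-theoretic stabilizer of each $E_j$, which would make $E_j$ translation-invariant and hence all of $X$ — absurd for a divisor. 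The two proofs are of comparable length; yours is more elementary in that it uses only basic facts about morphisms and irreducibility rather than the classification of degenerate line bundles, while the paper's is perhaps more conceptual in tying the statement to the geometry of the linear system $|E|$. Both are fine.

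One small expository remark: when you pass from $E_j+T_i=E_j$ to $T_i\subseteq\mathrm{Stab}(E_j)$, it is worth noting explicitly that containment $E_j+t\subseteq E_j$ for all $t\in T_i$ upgrades to equality because $-t\in T_i$ as well; this is the step that uses $T_i$ being a subgroup and not just a subvariety.
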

\begin{proof}
 If $E$ is ample, then the statement is easily seen to be true.
Thus we can suppose, without a loss of generality, that $E$ is
not ample. Then there exists an
abelian variety $A\neq 0$, a surjective homomorphism
$f:X\longrightarrow A$ and an ample divisor $H$ on $A$ such that
$E=f^*H$. Suppose now that $E$ does not
intersect $T_i+\gamma_i$ for every $i$, it follows that $H$ does
not intersect $f(T_i+\gamma_i)$ for every $i$. Hence, since $H$
is ample, the image $T_i+\gamma_i$ through $f$ is a point, and
therefore $T_i$ is contained in the kernel of $f$  for every
$i$. But this is impossible since the
$T_i$'s generate $X$.
\end{proof}

\begin{proof}[Proof of Theorem \ref{thm: A}]\hfill\\
Let $a_Z:Z\longrightarrow \mathrm{Alb}(Z)$  Albanese map of $Z$.
Let us state the following claim.\\

\textit{Claim 1:} For the generic $z\in Z$
the sheaf
$a_{Z\:*}(\fas{I}_z\otimes
\omega_Z^{\otimes2}\otimes\fas{J}
(||\omega_Z||))$ is $M$-regular.\\

Before proceeding with the proof of the Claim 1 let us see how
it implies Theorem A, this argument follows the one Pareschi
and Popa in \cite{PP3}. First of all we simplify a bit the
notation by letting $\F:=\omega_Z^{\otimes
2}\otimes\fas{J} (||\omega_Z||)$.
Now observe that, since the Albanese morphism $a_Z$ is
generically finite, a well known extension of
Grauert-Riemenschneider vanishing theorem (see, for example
\cite[proof of Proposition 5.4]{PP3}) yields that, for any
$\alpha \in\Pic^0(Z)$, the higher direct images
$R^ia_{Z\:*}(\omega_Z^{\otimes
2}\otimes\alpha\otimes\fas{J}(||\omega_Z\otimes\alpha||))$
vanish. Therefore, for every $i\geq 0$ we get the equality
\[V^i(Z,\omega_Z^{\otimes
2}\otimes\alpha\otimes
\fas{J}(||\omega_Z\otimes\alpha||))=V^i(\mathrm{Alb}(Z),a_{Z\:*}
(\omega_Z^ {
\otimes 2}\otimes\alpha\otimes
\fas{J}(||\omega_Z\otimes\alpha||))).
\]
By Nadel vanishing for adymptotic multiplier ideals (see
\cite[Theorem 11.2.12]{LazarsfeldIeII}) the loci at the left
hand side are empty when $i>0$. In particular
$a_{Z\:*}(\omega_Z^{
\otimes 2}\otimes\alpha\otimes
\fas{J}(||\omega_Z\otimes\alpha||))$ is $M$-regular on
$\mathrm{Alb}(Z)$ and hence
CGG everywhere (Example \ref{ese}).  Since $a_Z$ is generically
finite, we get that
$\omega_Z^{
\otimes 2}\otimes\alpha\otimes
\fas{J}(||\omega_Z\otimes\alpha||)$ is CGG away from $W$, the
exceptional locus of $a_Z$. This means by definition that the
map
\begin{equation}\label{eq:popa}
\bigoplus_{\beta\in \Omega}H^0(\omega_Z^{
\otimes 2}\otimes\alpha\otimes
\fas{J}(||\omega_Z\otimes\alpha||)\otimes
\beta)\otimes\beta^{-1}\xrightarrow{ev_z}
\omega_Z^{
\otimes 2}\otimes\alpha\otimes
\fas{J}(||\omega_Z\otimes\alpha||)\otimes\mathbb{C}({z})
\end{equation}
is surjective for every $z\notin W$ and for every
$\Omega\subseteq\Pic^0(Z)$ non-empty open set. Now we recall that
the base locus of
$\fas{J}(||\omega_Z\otimes\alpha||)$ is contained in the base
locus of $\omega_Z^{\otimes 2}\otimes\alpha^{\otimes 2}$ for
every $\alpha\in\Pic^0(Z)$; hence we can consider the closed set
$W'=W\cup\{\text{zero locus of
}\fas{J}(||\omega_Z\otimes\alpha||)\}$. It follows easily from
\eqref{eq:popa} that, if $z\notin W'$ the map
\begin{equation*}
\bigoplus_{\beta\in \Omega}H^0(\omega_Z^{
\otimes 2}\otimes\alpha\otimes
\beta)\otimes\beta^{-1}\xrightarrow{ev_z}
\omega_Z^{
\otimes 2}\otimes\alpha\otimes\mathbb{C}({z})
\end{equation*}
is surjective for every $\Omega$ open dense set in $\Pic^0(Z)$
and thus
$\omega_Z^{\otimes2}\otimes\alpha$ is CGG away from $W'$.\\
\indent From now on, since we are supposing that the claim is
true, we can take $U$ a non-empty open
set of $Z$
such that for every $z\in U$ the sheaf
$a_{Z\:*}(\fas{I}_z\otimes \F)$ is $M$-regular and hence CGG.
Again, it follows that
$\fas{I}_z\otimes
\F$ is CGG away from $W$. Finally by
\cite[Proposition 2.12]{PPI}
$\fas{I}_z\otimes\F\otimes \omega_Z^{\otimes 2}\otimes\alpha$ is
globally generated away from $W'$ and therefore
$\fas{I}_z\otimes \omega_Z^{\otimes 4}\otimes \alpha$ is
globally generated away from $T=W'\cup\{\text{zero locus of
}\fas{J}(||\omega_Z||) \}$ and
we can conclude that $\omega_Z^{\otimes
4}\otimes\alpha$ is very ample outside a proper subvariety of $Z$
and hence
$\varphi_{|\omega_Z^{\otimes 4}\otimes \alpha|}$ is birational
for every $\alpha \in\Pic^0(Z)$. This prove the Theorem.\\

\indent Now we proceed with the proof Claim 1. Recall that, by
the Subtorus
Theorem (\cite[Theorem 0.1]{GL2} and \cite{S}), we
can write
\[V^0(\omega_Z)=\bigcup_{i=1}^k T_k +[\beta_k]\] 
where the $\beta_j$'s are  torsion points of $\Pic^0(Z)$ and the
$T_j$'s are subtori of $\Pic^0(Z)$. Hence, for all
$i$ and for all
$\alpha \in T_i$ the line bundle $\omega_Z\otimes
\alpha\otimes\beta_i$ has
sections, and therefore its base locus $Bs(\omega_Z\otimes
\alpha\otimes\beta_i)$ is a proper subvariety of Z. We take an
non-empty
Zariski open set $U$ contained in the complement of
\[W\cup
\bigcap_{i=1}^k\bigcup_{\alpha\in
T_i}\mathrm{Bs}(\omega_Z\otimes\alpha\otimes\beta_i). \]
Given $z\in U$ we want to prove that for every
$i\geq1$
\[\mathrm{Codim}\:V^i(a_{Z\:*}(\fas{I}_z\otimes
\F))\geq i+1.\]
Consider the following short exact sequence:
\begin{equation}\label{corta}\escorta{\fas{I}_z\otimes
\F\otimes\gamma}{\F\otimes\gamma}{\F\otimes\gamma\otimes\C(z)},
\end{equation}
where $\gamma\in\Pic^0(Z)$.
Since $z$ does not belong to the exceptional locus of the
Albanese map of $Z$, by pushing forward for $a_Z$ we still get a
short exact
sequence:\begin{equation}\label{eq:short}\escorta{a_{Z\:*}(\fas{
I}
_z\otimes
\F\otimes\gamma)}{a_{Z\:*}(\F\otimes\gamma)}{
a_{Z\:*}(\F\otimes\gamma\otimes\C(z)) } .\end{equation}
In particular $R^1a_{Z\:*}(\fas{
I}
_z\otimes
\F\otimes\gamma)=0$. Since, as already observed, an extension of
Grauert-Riemenschneider vanishing, yields the vanishing the
 $R^ja_{Z\:*}(\omega_Y^{\otimes
2}\otimes\fas{J}(||\omega_Y||)\otimes\gamma)$ for every $j>0$ and
every $\gamma\in\Pic^0(Z)$, we have that $R^ja_{Z\:*}(\fas{
I}
_z\otimes
\F\otimes\gamma)=0$ for every $j>0$ and every $\gamma$.
Therefore, for every $j\geq 0$ 
\begin{equation}\label{equality}
V^i(\mathrm{Alb(Z)},a_{Z\:*}(\fas{I}_z\otimes
\F))=V^i(Z,\fas{I}_z\otimes
\F).
\end{equation}
Now we look at the long cohomology sequence of the short exact
sequence \eqref{corta}. By Nadel vanishing we have that, for
every $i\geq 1$ and $\gamma\in\Pic^0(Z)$,
\[ H^i(Z,
\omega_Y^{\otimes
2}\otimes\fas{J}(||\omega_Y||)\otimes\alpha)%
{=}0 
\]
and therefore, by \eqref{equality}
the loci $V^i(a_{Z\:*}(\fas{I}_z\otimes
\F))$ are empty for every $i\geq
2$.\\
 It remains to see that
$\mathrm{Codim}V^1(a_{Z\:*}(\fas{I}_z\otimes\F))>1$. Before
proceeding further we state the following:\\

\indent\textit{Claim 2:} For every $j=1,\ldots,k$,
$V^1(\fas{I}_z\otimes
\F)\cap (T_j+[\beta_j^{\otimes 2}])=\emptyset.$\\

The reader may observe that, since by a result of Chen and Hacon
(\cite[Theorem 1]{Chen2001}) the $T_j$'s generate $\Pic^0(Z)$,
this, together with Lemma
\ref{lem:effdiv}, implies directly Claim 1. In fact  the locus
$V^1(\fas{I}_z\otimes\F)$ is a proper subvariety of
$\Pic^0(Z)$ that cannot possibly contain a
divisor, hence its codimension
shall be greater than 1. By \eqref{equality} also
$\mathrm{Codim}V^1(a_{Z\:*}(\fas{I}_z\otimes\F))>1$.\\

\indent To prove Claim 2 we reason in the following way.  First
we remark that there is an
alternative description of the locus
$V^1(\fas{I}_z\otimes\F)$: in fact by \eqref{corta} and the
vanishing of $H^1(Z,\F\otimes\gamma)$ for every $\gamma$ (again
due to Nadel vanishing) we have that
$H^1(Z,\fas{I}_z\otimes\F\otimes\gamma)\neq 0$ if and only if
the map
$H^0(Z,
\F\otimes\gamma)\longrightarrow\F\otimes\gamma\otimes\C(z)$ is
not surjective. Therefore we can write
\[V^1(\fas{I}_z\otimes\F)=\{\gamma\in\Pic^0(Z)\text{
such that } z\notin \mathrm{Bs}(\F\otimes\gamma)\}.\]
Since we chose $z\notin\bigcap_{\alpha\in
T_j}\mathrm{Bs}(\omega_Z\otimes\alpha\otimes\beta_i)$ there
exists $V_j$ a dense open set of $T_j$  such that, for every 
$\delta\in V_j$, $z$ is not in
$\mathrm{Bs}(\omega_Z\otimes\beta_i\otimes\delta)$. Take
$\gamma\in T_j$; the intersection $V_j\cap(\gamma-V_j)$
is still a non-empty open set of $T_j$. For $\delta\in
V_j\cap(\gamma-V_j)$ we write
\[\omega_Z^{\otimes 2}\otimes\beta_j^{\otimes 2}\otimes \gamma
\simeq(\omega_Z\otimes\beta_j\otimes\delta)\otimes
(\omega_Z\otimes\beta_j\otimes\gamma\otimes\delta^{-1});\]
since, thanks to our choice of $\delta$, both
$\omega_Z\otimes\beta_j\otimes\delta$ and
$\omega_Z\otimes\beta_j\otimes\gamma\otimes\delta^{-1}$ are
generated
at $z$, also the left hand side is. Therefore
$z\notin\mathrm{Bs}(\omega_Z^{\otimes
2}\otimes\beta_j^{\otimes
2}\otimes\gamma)$.
Now by
\cite[Lemma
6.3(b)]{PP3}, the
zero locus of
$\fas{J}(||\omega_Z||)$
is contained in the base
locus of
$\omega_Z^{\otimes
2}\otimes\alpha$ for
every $\alpha
\in\Pic^0(Z)$; thus we
have the equality $\mathrm{Bs}(\omega_Z^{\otimes
2}\otimes\beta_j^{\otimes
2}\otimes\gamma)=\mathrm{Bs}(\F\otimes\beta_j^{
\otimes
2}\otimes \gamma)$ and, consequently,
$\gamma\otimes\beta_j^{\otimes 2}\notin
V^1(\fas{I}_z\otimes\F)$ and the claim is proved.
\end{proof}


\begin{bibdiv}
\begin{biblist}

\bib{Bo}{article}{
      author={Bombieri, E.},
       title={{Canonical models for surfaces of general type}},
        date={1973},
     journal={Inst. Hautes \'Etudes Sci. Publ. Math.},
      volume={42},
       pages={127\ndash 219},
}

\bib{Chen2001}{article}{
      author={Chen, J.A.},
      author={Hacon, C.~D.},
       title={{Pluricanonical maps of varieties of maximal
Albanese
  dimension}},
        date={2001},
     journal={Mathematische Annalen},
      volume={320},
      number={2},
       pages={367\ndash 380},
        
url={http://www.springerlink.com/index/N3A5AU4TWK0VGH4E.pdf},
}

\bib{CH2004}{article}{
      author={Chen, J.A.},
      author={Hacon, C.~D.},
       title={{On the Irregularity of the Iitaka fibration}},
        date={2004},
     journal={Comm. in Algebra},
      volume={32},
      number={1},
       pages={203\ndash 215},
}

\bib{CH2007}{article}{
      author={Chen, J.A.},
      author={Hacon, C.~D.},
       title={{Pluricanonical systems on irregular threefold of
general type}},
        date={2007},
     journal={Math. Zeit},
      volume={255},
       pages={203\ndash 215},
}

\bib{Ein1997}{article}{
      author={Ein, L.},
      author={Lazarsfeld, R.},
       title={{Singularities of theta divisors and the birational
geometry of
  irregular varieties}},
        date={1997},
     journal={Journal of the American Mathematical Society},
      volume={10},
      number={1},
       pages={243\ndash 258},
}

\bib{GL1}{article}{
      author={Green, M.},
      author={Lazarsfeld, R.},
       title={{Deformation theory, generic vanishing theorems,
and some
  conjectures of Enriques, Catanese and Beauville}},
        date={1987},
     journal={{Invent. Math.}},
      volume={90},
       pages={389\ndash 407},
}

\bib{GL2}{article}{
      author={Green, M.},
      author={Lazarsfeld, R.},
       title={{Higher obstructions to deforming cohomology groups
of line
  bundles}},
        date={1991},
     journal={{J. Amer. Math. Soc.}},
      volume={4},
       pages={87\ndash 103},
}

\bib{Jiang2009}{article}{
      author={Jiang, Z.},
       title={{On varieties of maximal Albanese dimension}},
        date={2009-09},
        note={ArXiv: 0909.4817},
}

\bib{LazarsfeldIeII}{book}{
      author={Lazarsfeld, R.},
       title={{Positivity in algebraic geometry I \& II}},
   publisher={Springer-Verlag},
        date={2004},
      volume={4},
}

\bib{Mu}{article}{
      author={Mukai, S.},
       title={Duality between ${D}({X})$ and ${D}(\widehat{X})$
with its
  application to picard shaves},
        date={1981},
     journal={Nagoya Math. J.},
      volume={81},
       pages={153\ndash 175},
}

\bib{PPI}{article}{
      author={Pareschi, G.},
      author={Popa, M.},
       title={Regularity on abelian varieties {I}},
        date={2003},
     journal={J. Amer. Math. Soc.},
      volume={16},
       pages={285\ndash 302},
}

\bib{PP3}{unpublished}{
      author={Pareschi, G.},
      author={Popa, M.},
       title={Regularity on abelian varieties {III}: relationship
with
  {G}eneric {V}anishing and applications},
        note={Preprint ArXiv:0802.1021, to appear in the
Proceedings of the
  {C}lay {M}athematics {I}nstitute \textbf{14} (2011)},
}

\bib{S}{article}{
      author={Simpson, C.},
       title={{Subspaces of moduli spaces of rank one local
systems}},
        date={1993},
     journal={{Ann. Sci.\'{E}c. Norm. Sup}},
      volume={26},
       pages={361\ndash 401},
}

\end{biblist}
\end{bibdiv}

\end{document}